\newtheorem{theorem}{Theorem}[section]
\newtheorem{lemma}[theorem]{Lemma}
\newtheorem{corollary}[theorem]{Corollary}
\theoremstyle{definition}
\newtheorem*{remarks}{Remarks}
\newtheorem*{definition}{Definition}
\newcommand{\B}{{\bf b}}
\newcommand{\R}{\mathbb{R}}
\newcommand{\C}{\mathbb{C}}
\newcommand{\N}{\mathbb{N}}
\renewcommand{\B}{\mathbb{B}}
 \newcommand{\av}{\arrowvert}
 \renewcommand\mod{\operatorname{mod}}
 \newcommand\capac{\operatorname{cap}}
 \newcommand\inte{\operatorname{int}}
 \renewcommand\dim{\operatorname{dim}}
\numberwithin{equation}{section}
\begin{document}

\title[Julia sets of uniformly quasiregular mappings are uniformly perfect]
  {Julia sets of uniformly quasiregular mappings are uniformly perfect}

\author{Alastair N. Fletcher}
\address{Mathematics Institute\\
University of Warwick\\
Coventry\\
CV4 7AL}
\email{alastair.fletcher@warwick.ac.uk}

\author{Daniel A. Nicks}
\address{Dept of Mathematics and Statistics\\
The Open University\\
Milton Keynes\\
MK7~6AA}
\email{d.nicks@open.ac.uk}




\begin{abstract}
It is well-known that the Julia set $J(f)$ of a rational map $f:\overline{\C} \to \overline{\C}$ is uniformly perfect; that is,
every ring domain which separates $J(f)$ has bounded modulus, with the bound depending only on $f$.
In this article we prove that an analogous result is true in higher dimensions; namely, that the Julia
set $J(f)$ of a uniformly quasiregular mapping $f:\overline{\R^{n}} \to \overline{\R^{n}}$ is uniformly perfect.
In particular, this implies that the Julia set of a uniformly quasiregular mapping has positive Hausdorff dimension. 
\end{abstract}

\maketitle

\section{Introduction}

\subsection{Historical background}

The usage of the term uniformly perfect was first introduced by Pommerenke \cite{P}, but the idea originates in Beardon-Pommerenke \cite{BP} and Tukia-V\"{a}is\"{a}l\"{a} \cite{TV}, the latter under the guise
of homogeneously dense sets. 

The study of uniformly perfect sets has connections to hyperbolic geometry of domains in $\C$, limit sets of Fuchsian groups and even
holomorphic quadratic differentials and Teichm\"{u}ller theory. We refer to
\cite{S2} and the references contained therein for a more complete overview.

In relation to complex dynamics, it was proved independently by Eremenko \cite{E}, Hinkkanen \cite{H} and Man\'{e} and da Rocha \cite{MR} that the Julia set $J(f)$
of a rational map of the $2$-sphere $f:S^2 \rightarrow S^2$ is uniformly perfect, where we identify $S^{n}$ with $\R^{n} \cup \{ \infty \}$. 
An example of Baker \cite{B} (see \cite{H} for further details) shows that $J(f)$ need not be uniformly perfect
when $f$ is a transcendental entire function.

Quasiregular mappings of $\mathbb{R}^n$ share many 
properties with holomorphic functions of the plane, which gives rise to
the possibility of a rich theory of iteration of quasiregular mappings
in analogue to the well-studied field of complex dynamics.
For an overview of the current state of the theory of quasiregular dynamics, see the survey article of Bergweiler \cite{Berg}.
In \cite{JV}, J\"{a}rvi and Vuorinen investigated uniformly
perfect sets in $S^{n}$, in connection with quasiregular mappings. 

Uniformly quasiregular mappings were introduced by Iwaniec and Martin in \cite{IM} and are the subject of a number of papers. We restrict ourselves to mentioning Hinkkanen, Martin and Mayer's paper \cite{HMM}, where the interested reader can find further references. The uniformity condition on these mappings allows Julia and Fatou sets to be defined in direct analogue
with complex dynamics. Siebert proved in her thesis \cite{S}
that the Julia set of a uniformly quasiregular mapping $f:S^n \rightarrow S^n$ is perfect.

\subsection{Statement of results}

Before stating the main theorem, we make more precise the notion
of a uniformly perfect set. A ring domain $R \subset S^n$
is a domain whose complement $S^n \setminus R$ has
precisely two connected components. A ring domain $R$ is said to separate
a set $X$ if $R \cap X = \emptyset$ and both connected components of the complement of $R$ meet $X$. We defer the definition of the modulus
$\mod R$ of a ring domain $R$ until the next section, but remark here that it is
a measure of the thickness of a ring domain.

\begin{definition}
A closed set $E \subset S^n$ containing at least two points is called $\alpha$-uniformly perfect if there is no ring domain $R \subset S^n$ separating $E$ such that $\mod R > \alpha$. Further,
$E$ is called uniformly perfect if it is $\alpha$-uniformly perfect for some $\alpha >0$.
\end{definition}

The main theorem to be proved in this paper is as follows.

\begin{theorem}
\label{mainthm}
Let $f:S^n \rightarrow S^n$ be a uniformly quasiregular mapping that is not injective. Then the Julia set $J(f)$ is uniformly perfect. 
\end{theorem}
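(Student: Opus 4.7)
I would argue by contradiction. Suppose $J(f)$ is not uniformly perfect, so there is a sequence of ring domains $R_m \subset S^n$ separating $J(f)$ with $\mod R_m \to \infty$. Using the classical fact that a ring domain of sufficiently large modulus contains a round spherical annulus of comparable modulus, I pass to a sequence of annuli $A_m = \{r_m < |x - x_m| < s_m\}$ (in a suitable chart of $S^n$) that also separate $J(f)$, with $\mod A_m \to \infty$. Let $D_m$ denote the bounded complementary component of $A_m$; both $D_m$ and its counterpart meet $J(f)$.

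Next, I would exploit the dynamics. Because $f$ is uniformly quasiregular, every iterate $f^k$ is $K$-quasiregular for a common $K = K(f)$. Picking $z_m \in D_m \cap J(f)$ and a small ball $U_m = B(z_m, \delta_m) \subset D_m$, non-normality of $\{f^k\}$ at $z_m$, together with a Miniowitz-type quasiregular Montel theorem, forces $\bigcup_{k \ge 0} f^k(U_m)$ to omit at most a bounded number $q = q(n, K)$ of points in $S^n$. In particular, for some iterate $k_m$ the image $f^{k_m}(U_m)$ meets the outer complementary component of $A_m$, and by complete invariance of $J(f)$ it meets $J(f)$ in both complementary components of $A_m$.

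I would then pull the annulus $A_m$ back through $f^{k_m}$. A suitable component $W_m$ of $(f^{k_m})^{-1}(A_m)$ meeting $U_m$ should yield a ring domain $V_m \subset U_m$ separating $J(f)$, whose modulus satisfies $\mod V_m \ge c(n,K) \mod A_m$ via a Poletsky--Rickman style modulus inequality for $K$-quasiregular branched covers; the constant $c$ must be taken independent of $\deg f^{k_m}$. Iterating the construction inside the small complementary component of $V_m$ produces nested ring domains of rapidly increasing modulus, whose small components shrink to a single limit point $\zeta \in J(f)$. Combining this with Siebert's theorem that $J(f)$ is perfect, the growing moduli and the forced gap geometry of the nested rings can be used to isolate $\zeta$ in $J(f)$, yielding the desired contradiction.

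The principal obstacle is the third step: the preimage of a ring under a quasiregular branched cover of high degree is not automatically a ring domain, and the naive modulus estimate is degraded by the degree. The essential technical input is therefore a modulus distortion inequality whose constant depends only on $n$ and $K$ (and not on the degree of the iterate being pulled back), together with a topological argument selecting a component of $(f^{k_m})^{-1}(A_m)$ of ring-domain type whose complement genuinely separates pieces of $J(f)$.
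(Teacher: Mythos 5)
Your outline starts from the same reduction as the paper (pass from arbitrary separating rings to round annuli of divergent modulus, with the inner component small and meeting $J(f)$), and both arguments invoke the quasiregular Montel theorem of Miniowitz. But the central mechanism you propose --- pulling back the annulus $A_m$ through $f^{k_m}$ and extracting a ring-domain component in $U_m$ whose modulus is bounded below by $c(n,K)\mod A_m$ --- fails for the precise reason you flag. Modulus inequalities for $K$-quasiregular maps of the Poletsky/V\"ais\"al\"a type always degrade by a factor of the local index or degree, and this is sharp: for $z\mapsto z^d$ the preimage of $A_d(0,1,R)$ is $A_d(0,1,R^{1/d})$, whose modulus is $(\log R)/d$. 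Since $k_m\to\infty$ and $\deg f^{k_m}=(\deg f)^{k_m}\to\infty$, any such bound collapses. For rational functions on $S^2$ one can often rescue a pullback argument by controlling the finitely many critical values (this is essentially what is done in the proofs of Hinkkanen and Man\'e--da Rocha), but in dimension $n\ge 3$ the branch set of a quasiregular map may have positive dimension and the number of branch points of iterates is not controlled, so no degree-free modulus distortion inequality of the kind you need is available. This is not a technical nuisance but a genuine obstruction to the backward-iteration strategy, and the paper explicitly chooses a different route for exactly this reason.

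The paper's argument runs \emph{forward} instead of backward. After normalising so that $J(f)\subset\R^n$ is compact, it takes the closed inner disk $E_k$ of the annulus and uses the expansion property of $J(f)$ (Lemma~\ref{lemma5}, from Hinkkanen--Martin--Mayer) to pick the first iterate $j_k$ at which $d(f^{j_k}(E_k))>\delta$ while $d(f^{j_k-1}(E_k))\le\delta$; H\"older continuity of $K$-quasiregular maps on a fixed neighbourhood of $J(f)$ then gives an \emph{upper} bound $d(f^{j_k}(E_k))\le C\delta^{\alpha}$ with constants independent of the iterate. Rescaling the outer ball to $\B^n$ via the similarity $\psi_k$, the maps $g_k=f^{j_k}\circ\psi_k$ are all $K$-quasiregular, omit $q(n,K)$ points of $J(f)$ on $\B^n$ (because $g_k(\B^n\setminus V_k)$ lies in the Fatou set and $g_k(V_k)$ has diameter at most $C\delta^{\alpha}$), hence form a normal family by Montel; but then $d(g_k(V_k))\to 0$ because $d(V_k)\to 0$, contradicting the lower bound $d(g_k(V_k))>\delta$. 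The crucial point is that normality holds with respect to a single $K$ that is uniform over all iterates --- this is precisely what uniform quasiregularity gives --- and no inverse-branch or degree-dependent estimate is ever needed. You would need to replace your third and fourth steps with an argument of this kind; as written, your proposal has a gap that cannot be closed by a sharper modulus inequality.
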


The central idea of the proof follows the proof that Julia sets of rational functions on $S^2$ are uniformly perfect, as given by \cite{CG}. As noted in the first section, there are several proofs of this result. However, the proof in \cite{CG} 
is more elementary than those given in \cite{H,MR}, and is also more readily extended to uniformly quasiregular mappings than \cite{E,S1}.
As a corollary to Theorem \ref{mainthm}, we have the following result.

\begin{theorem}
\label{thm2}
Let $f:S^n \rightarrow S^n$ be a uniformly quasiregular mapping that is not injective. Then the Julia set $J(f)$ has positive Hausdorff dimension.
\end{theorem}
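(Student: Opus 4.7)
The plan is to derive Theorem \ref{thm2} as a direct corollary of Theorem \ref{mainthm}, combined with a known lower bound on the Hausdorff dimension of uniformly perfect subsets of $S^n$. Once Theorem \ref{mainthm} is in hand, the Julia set $J(f)$ of a non-injective uniformly quasiregular mapping $f:S^n\to S^n$ is $\alpha$-uniformly perfect for some $\alpha>0$, and this is essentially all that is needed.

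The second ingredient comes from the work of J\"{a}rvi and Vuorinen \cite{JV}, who showed that every uniformly perfect subset of $S^n$ has positive Hausdorff dimension. In quantitative form, an $\alpha$-uniformly perfect set $E\subset S^n$ satisfies a lower bound $\dim E \geq c(n,\alpha) > 0$, where the constant depends only on the dimension $n$ and on the uniform perfectness parameter $\alpha$. Applying this estimate to $E=J(f)$ yields the conclusion of Theorem \ref{thm2} immediately.

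There is no real obstacle here, since both inputs to the argument are already available: Theorem \ref{mainthm} is the main theorem of the paper, and the dimension estimate is the cited result from \cite{JV}. The one preliminary point worth verifying is that $J(f)$ is genuinely a set to which the definition of uniform perfectness applies, i.e.\ that it contains at least two points. This is ensured by Siebert's theorem \cite{S}, which states that under our standing hypothesis that $f$ is a non-injective uniformly quasiregular mapping, the Julia set $J(f)$ is perfect and in particular contains no isolated points. Thus the proof of Theorem \ref{thm2} collapses to a one-line invocation of Theorem \ref{mainthm} together with \cite{JV}.
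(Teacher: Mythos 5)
Your proposal is correct and follows essentially the same route as the paper: Theorem~\ref{mainthm} gives uniform perfectness, and the result of J\"{a}rvi and Vuorinen (the paper cites their Theorem~4.1, which provides a lower bound on the $\beta$-dimensional Hausdorff content of $\overline{B_d(x,r)}\cap J(f)$, from which positive Hausdorff dimension follows immediately) completes the argument. Your extra remark invoking Siebert's theorem to guarantee $|J(f)|\geq 2$ is not strictly necessary, since the paper's definition of uniformly perfect already builds in the requirement of at least two points, so Theorem~\ref{mainthm} supplies this for free.
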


This result follows from Theorem \ref{mainthm} and characterisations of
uniformly perfect sets given in \cite{JV}. We prove Theorem \ref{thm2} and obtain further applications
of \cite{JV} in the final section of this paper.

\begin{remarks}
 \mbox{} 
\begin{enumerate}
\item For examples of mappings to which Theorem \ref{mainthm} applies, see the Latt\`{e}s type maps considered
by Mayer in \cite{M1,M2}. In particular, note that these include higher dimensional analogues of power mappings and Chebyshev polynomials.
\item In Theorem \ref{mainthm}, we cannot relax the domain of $f$ to $\R^{n}$ and allow $f$ to have an essential singularity at infinity
due to Baker's example \cite{B}, recalling that holomorphic functions in the plane are uniformly $1$-quasiregular. Further, any transcendental entire function with multiply-connected Fatou components is a counterexample by a result of Zheng \cite{Zheng}. See also \cite{BZ} for results in this direction.
However, as far as the authors are aware, all known examples of uniformly quasiregular mappings of $\R^{n}$, for $n \geq 3$, extend to mappings of $S^n$.
\item Quasiregular iteration can be considered even when the mappings are not uniformly quasiregular. 
A key object of interest in this setting is the escaping set 
\[ I(f) = \{x \in \R^{n}:f^{n}(x) \to \infty \},\]
see \cite{BFLM,FN}.
In this case, the boundary of the escaping set $\partial I(f)$ has been posited as an analogue for the Julia set. 
A natural question is to ask whether there are conditions under which $\partial I(f)$ must or must not be uniformly perfect, 
in analogy with the work of Bergweiler and Zheng \cite{BZ,Zheng} for transcendental entire functions.
\end{enumerate}
\end{remarks}

The outline of the paper is as follows.
In Section~2 we introduce relevant definitions and notation, 
and state some intermediate lemmas that will be needed.
Section~3 contains the proof of Theorem~\ref{mainthm}, and in Section~4 we present some consequences of 
Theorem~\ref{mainthm} and results of J\"{a}rvi and Vuorinen \cite{JV}.

\section{Preliminaries}

\subsection{Quasiregular mappings}

A continuous mapping $f:G \rightarrow \R^n$ from a domain $G \subset \R^n$ is called quasiregular if $f$ belongs to the Sobolev space $W^{1}_{n,\mathrm{loc}}(G)$ and there exists $K \in [1, \infty)$ such that 
\begin{equation}
\label{eq2.1}
\av f'(x) \av ^{n} \leq K J_{f}(x)
\end{equation}
almost everywhere in $G$. Here $J_{f}(x)$ denotes the Jacobian determinant of $f$ at $x \in G$. The smallest constant $K \geq 1$ for which (\ref{eq2.1}) holds is called the outer dilatation $K_{O}(f)$. 
If $f$ is quasiregular, then we also have
\begin{equation}
\label{eq2.2}
J_{f}(x) \leq K' \inf _{\av h \av =1} \av f'(x) h \av ^{n}
\end{equation}
almost everywhere in $G$ for some $K' \in[1, \infty)$. The smallest constant $K' \geq 1$ for which (\ref{eq2.2}) holds is called the inner dilatation $K_{I}(f)$. The maximal dilatation $K(f)$ is the larger of $K_{O}(f)$ and $K_{I}(f)$, and a mapping $f$ is called $K$-quasiregular if $K(f)\le K$.
In dimension $n=2$, we have $K_{O}(f) = K_{I}(f)$.
If $G \subset S^n$ is a domain and $f : G \to S^n$ is
continuous, then $f$ is called $K$-quasiregular if it can locally be written as the composition
of a $K$-quasiregular mapping on $\R^n$ and sense-preserving M\"{o}bius transformations on $S^n$.
We note that some authors call such mappings $K$-quasimeromorphic. See Rickman's monograph \cite{R} for further details on 
the theory of quasiregular mappings.

The following analogue of Picard's theorem is central in the value distribution of quasiregular mappings.

\begin{theorem}[Rickman's theorem, {\cite[Theorem IV.2.1]{R}}]
\label{rickman}
For every $n \geq 2$ and $K \geq 1$, there exists a positive integer $q=q(n,K)$ which depends only on $n$ and $K$, such that the following holds. Every $K$-quasiregular mapping $f:\R^{n} \rightarrow S^{n} \setminus \{ a_{1},\ldots,a_{m} \}$ is constant whenever
$m \geq q$ and $a_{1},\ldots,a_{m}$ are distinct points in $S^{n}$.
\end{theorem}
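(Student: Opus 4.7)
The plan is to argue contrapositively: suppose $f: \R^{n} \to S^{n} \setminus \{a_{1}, \ldots, a_{m}\}$ is a non-constant $K$-quasiregular mapping, and seek an upper bound on $m$ depending only on $n$ and $K$. My strategy is to develop enough Ahlfors--Shimizu--Nevanlinna value distribution theory in the quasiregular setting to obtain a quantitative defect relation that bounds the total defect of $f$ by a constant $C(n,K)$; the omitted values each contribute defect $1$, so this will cap $m$.

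First I would introduce the counting function (writing $\nu$ to avoid conflict with the dimension $n$)
\[
\nu(r, y) = \sum_{x \in f^{-1}(y) \cap B^{n}(0,r)} i(x, f),
\]
where $i(x, f)$ is the local topological index of $f$ at $x$, together with its spherical average $A(r) = \omega_{n}^{-1} \int_{S^{n}} \nu(r, y) \, d\omega(y)$. Combined with the integrated counting $N(r,y) = \int_{0}^{r} \nu(t, y) \, dt/t$ and the Shimizu--Ahlfors characteristic $T(r) = \int_{0}^{r} A(t) \, dt/t$, I would prove a first main theorem $T(r) = N(r, y) + m_{\mathrm{pr}}(r, y) + O(1)$ uniformly in $y \in S^{n}$, where $m_{\mathrm{pr}}(r,y)$ is a spherical proximity function. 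For an omitted value $a_{i}$, the counting term $N(r, a_{i})$ vanishes identically, so the Nevanlinna defect $\delta(a_{i}) := 1 - \limsup_{r \to \infty} N(r, a_{i})/T(r)$ equals $1$.

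The core of the proof is a second main theorem in the form of a defect relation: $\sum_{i=1}^{m} \delta(a_{i}) \leq C(n, K)$ with $C(n,K)$ depending only on $n$ and $K$. To establish this I would surround each $a_{i}$ with a small spherical cap, use Poletsky-type modulus inequalities to pull modulus estimates back from $S^{n}$ to $\R^{n}$ via $f$, and combine these with the quasiregularity bound (\ref{eq2.1}) together with capacity estimates for Gr\"{o}tzsch-type condensers. The geometric heart is a Loewner-type lower bound on the modulus of a path family separating the caps; a careful averaging over a finite covering of $S^{n}$ by such caps should convert pointwise modulus distortion into an integral growth estimate relating $T(r)$ and $A(r)$, from which the defect relation falls out.

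The principal obstacle will be this defect relation, because the two-dimensional holomorphic argument depends on the Nevanlinna lemma on the logarithmic derivative, which has no direct analogue for quasiregular maps in dimension $\geq 3$. Rickman's substitute is an intricate modulus-of-curve-families argument combined with delicate averaging, and this step is where the dependence of $q$ on $K$ enters. Once the defect relation is in hand, setting $q(n, K) := \lfloor C(n, K) \rfloor + 1$ finishes the theorem: if $m \geq q(n, K)$, then $m = \sum_{i=1}^{m} \delta(a_{i}) \leq C(n, K) < q(n, K) \leq m$, a contradiction, which forces $f$ to be constant.
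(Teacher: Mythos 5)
This statement is not proved in the paper at all: it is quoted verbatim from Rickman's monograph (Theorem IV.2.1 there), so there is no in-paper argument to compare with, and your proposal has to be judged as a standalone proof. As such it has a genuine gap. The entire content of Rickman's theorem is the quantitative step you defer: a second main theorem/defect relation $\sum_i \delta(a_i)\le C(n,K)$, or any substitute giving a bound on the number of omitted values depending only on $n$ and $K$. You describe this step only as ``an intricate modulus-of-curve-families argument combined with delicate averaging,'' which is to name the difficulty rather than resolve it; with that step treated as a black box, the proposal reduces to ``Rickman's theorem follows from Rickman's defect relation,'' which is circular as a proof of the cited result. Moreover, the route you choose is strictly harder than necessary: the defect relation is a deeper theorem that historically came after the Picard-type theorem, whose proof in Rickman's book proceeds by more direct comparison inequalities for counting functions averaged over spherical caps, built from moduli of path families, without an exact first main theorem.

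A second, more technical problem is your claimed first main theorem $T(r)=N(r,y)+m_{\mathrm{pr}}(r,y)+O(1)$ uniformly in $y$. That identity in the Ahlfors--Shimizu form rests on the fact that $\log$ of the chordal distance composed with a meromorphic function is (quasi)harmonic off the fibre, i.e.\ on complex-analytic potential theory; for quasiregular mappings in dimension $n\ge 3$ no such exact identity is available. What one actually has are one-sided inequalities obtained from modulus estimates, typically of the form $\nu(r,y)\le C\,A(\Lambda r)$ for suitable $\Lambda>1$ and constants depending on $n$ and $K$, valid outside exceptional sets of radii or of values. So even the scaffolding you propose to hang the defect relation on is not available in the stated form, and repairing it is again essentially the content of Rickman's value distribution theory. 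In short: the outline correctly identifies the shape of Rickman's argument, but the decisive estimates are asserted rather than proved, so this does not constitute a proof of the theorem.
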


Rickman's theorem leads to a quasiregular version of Montel's theorem. Recall that a family $\mathcal{F}$ of $K$-quasiregular mappings is called a normal family if every sequence in $\mathcal{F}$ has a subsequence which converges uniformly on compact subsets to a $K$-quasiregular mapping.

\begin{theorem}[Montel's theorem, \cite{M}]
\label{montel}
Let $\mathcal{F}$ be a family of $K$-quasiregular mappings in a domain $G \subset S^{n}$ and let $q=q(n,K)$ be Rickman's constant from Theorem~\ref{rickman}. If there exist distinct points $a_{1},\ldots,a_{q} \in S^{n}$ such that $f(G) \cap \{ a_{1},\ldots,a_{q} \} = \emptyset$ for all $f \in \mathcal{F}$, then $\mathcal{F}$ is a normal family.
\end{theorem}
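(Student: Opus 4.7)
The plan is to argue by contradiction using a rescaling (Zalcman-type) argument combined with Rickman's theorem. Suppose $\mathcal{F}$ omits the values $a_{1},\dots,a_{q}$ but is not normal. Then, working locally, there is a point $x_{0}\in G$ at which $\mathcal{F}$ fails to be equicontinuous with respect to the spherical metric. By composing with a sense-preserving M\"obius transformation of $S^n$, we may assume $x_{0}$ and a neighbourhood of it lie in $\R^n$.

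First, I would invoke a Zalcman-type rescaling lemma for $K$-quasiregular mappings (in the spirit of Miniowitz): from the failure of equicontinuity one extracts a sequence $f_{k}\in\mathcal{F}$, points $x_{k}\to x_{0}$, and positive scalars $\rho_{k}\to 0^{+}$ such that the rescaled mappings
\[
g_{k}(y)=f_{k}(x_{k}+\rho_{k} y)
\]
converge locally uniformly on $\R^n$, with respect to the spherical metric on $S^n$, to a non-constant $K$-quasiregular mapping $g:\R^n\to S^n$. Each $g_{k}$ inherits from $f_{k}$ the property of omitting $a_{1},\dots,a_{q}$.

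Next, I would apply a Hurwitz-type argument to pass the omitted-value property to the limit. Since non-constant quasiregular mappings are discrete and open, if $g$ took some value $a_{j}$ at a point $y_{0}$, then $g$ would be surjective onto a neighbourhood of $a_{j}$, and a standard argument using local degree and uniform convergence forces $g_{k}(y)=a_{j}$ for some $y$ near $y_{0}$ and all large $k$, contradicting that $g_{k}$ omits $a_{j}$. Hence $g(\R^n)\cap\{a_{1},\dots,a_{q}\}=\emptyset$. But then $g:\R^n\to S^n\setminus\{a_{1},\dots,a_{q}\}$ is a non-constant $K$-quasiregular mapping avoiding $q=q(n,K)$ values, in direct contradiction with Rickman's theorem.

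The main obstacle is the Zalcman-type rescaling lemma: in the holomorphic setting this follows cleanly from Marty's criterion via the spherical derivative, while for quasiregular mappings one needs a suitable substitute that accommodates the two-sided dilatation bound and lack of conformality, and one must verify that the limit of the rescaled sequence is genuinely $K$-quasiregular and non-constant. Once this rescaling lemma is in hand, the Hurwitz step and the appeal to Rickman's theorem are essentially routine.
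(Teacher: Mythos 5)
Your proof plan is correct and reproduces essentially the approach of the reference the paper cites for this statement. The paper does not prove Theorem~\ref{montel}; it attributes it to Miniowitz~\cite{M}, and Miniowitz's argument is exactly the one you outline: establish a Zalcman-type rescaling lemma for $K$-quasiregular mappings (this is the main technical content of~\cite{M}), observe that the rescaled limit inherits the omitted values by a degree-theoretic Hurwitz argument (valid because non-constant quasiregular maps are discrete, open and sense-preserving), and then derive a contradiction with Rickman's theorem. You correctly identify the rescaling lemma as the crux; once that is available the rest is routine, as you say.
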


\subsection{Iteration of quasiregular mappings}

The composition of two quasiregular mappings is again quasiregular.
For $k\in\mathbb{N}$, we write $f^{k}$ for the $k$-fold composition of a function $f$. 
A quasiregular mapping $f$ is called uniformly $K$-quasiregular if all the iterates of $f$ are $K$-quasiregular.

The Fatou set $F(f)$ of a uniformly quasiregular mapping $f:S^{n} \rightarrow S^{n}$ is defined as follows. A point $x\in S^{n}$ belongs to $F(f)$
if there exists a neighbourhood $U$ of $x$ such that the family
$\{ f^{k} \av _{U} : k \in \N \}$ is normal. 
The Julia set $J(f)$ is defined to be the complement of $F(f)$ in $S^n$, and so $S^n$ is partitioned into the open Fatou set and the closed Julia set. Both of these sets are completely invariant under $f$.

The exceptional set $\mathcal{E}_{f}$ is defined to be the largest discrete completely invariant set such that any open set $U$ which meets $J(f)$
satisfies
\[
S^n \setminus \mathcal{E}_{f} \subset \bigcup _{k =0}^{\infty} f^{k}(U). 
\]
By Montel's theorem, the exceptional set cannot contain more than $q(n,K)$ points and it is contained in $F(f)$; we refer to \cite{HMM} for further details on the exceptional set.

The following lemma reveals an expanding property on the Julia set.

\begin{lemma}[{\cite[Proposition 3.2]{HMM}}]
\label{lemma5}
Let $f:S^n \to S^n$ be uniformly quasiregular and let $U$ be an open set that meets $J(f)$. 
Then there exists $N \in \N$ such that $\overline{U} \subset f^{N}(U)$,
and such that $U _{k} = f^{kN}(U)$ is an increasing sequence exhausting $S^{n} \setminus \mathcal{E}_{f}$.
\end{lemma}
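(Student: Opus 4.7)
The plan is to adapt the classical blowing-up argument from complex dynamics, combining Montel's theorem (Theorem~\ref{montel}) with the defining property of $\mathcal{E}_f$ recalled just above. Without loss of generality, after possibly shrinking $U$ around a point $y_0\in U\cap J(f)$, I may assume $\overline{U}\cap\mathcal{E}_f=\emptyset$: since $\mathcal{E}_f$ is finite and contained in $F(f)$, while $y_0\in J(f)\setminus\mathcal{E}_f$, a sufficiently small ball $B(y_0,r)\subset U$ has this property.

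I first collect the coverage. Each iterate $f^k(U)$ is open (since nonconstant quasiregular maps are open) and meets $J(f)$ (by complete invariance), so applying the defining property of $\mathcal{E}_f$ to $f^{k_0}(U)$ gives the tail statement
\[
\bigcup_{k\ge k_0} f^k(U)\supset S^n\setminus\mathcal{E}_f \quad \text{for every } k_0\ge 0.
\]
Since $\overline{U}$ is compact and disjoint from $\mathcal{E}_f$, this covers $\overline{U}$ by a finite subcollection from each tail.

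The main task, and the main obstacle, is to upgrade this finite covering to a single iterate: produce $N$ such that $\overline{U}\subset f^N(U)$. Here I would use that a uniformly quasiregular mapping has repelling periodic points dense in $J(f)$ (a standard ingredient in \cite{HMM}). After shrinking $U$ if necessary to a small ball around a repelling periodic point $p$ of period $N_0$, the map $f^{N_0}$ expands distances near $p$ in a controlled way, so the sequence $f^{kN_0}(U)$ is strictly increasing. Combined with the tail coverage and Montel's theorem, its union exhausts $S^n\setminus\mathcal{E}_f$, so some $f^{kN_0}(U)$ must contain the compact set $\overline{U}$; set $N=kN_0$.

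Once $\overline{U}\subset f^N(U)$ is established, the inclusions $U_k:=f^{kN}(U)\subset U_{k+1}$ follow by applying $f^{kN}$ to both sides of $\overline{U}\subset f^N(U)$. For the exhaustion, $U_k\cap\mathcal{E}_f=\emptyset$ for all $k$ by complete invariance of $\mathcal{E}_f$ together with $U\cap\mathcal{E}_f=\emptyset$; and applying the defining property to the iterate $f^N$, whose exceptional set coincides with $\mathcal{E}_f$ (every completely invariant set for $f$ is completely invariant for $f^N$, and the reverse containment follows from a short argument using density of repelling periodic points), gives $\bigcup_k U_k\supset S^n\setminus\mathcal{E}_f$.
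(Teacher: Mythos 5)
You should note first that the paper does not prove Lemma~\ref{lemma5} at all: it is quoted verbatim from \cite[Proposition 3.2]{HMM}, so your argument has to stand on its own, and it does not. The decisive gap is the step that produces the nesting $V\subset f^{N_0}(V)$ (and hence the increasing sequence): you obtain it by shrinking to a ball about a repelling periodic point, justified by the claim that repelling periodic points are dense in $J(f)$ and that this is ``a standard ingredient in \cite{HMM}''. That density is classical for rational maps, but it is not proved in \cite{HMM} (which studies local dynamics near a \emph{given} fixed point) and is not a known result for uniformly quasiregular maps in dimension $n\ge 3$; what is available (Siebert \cite{S}) is that $J(f)$ lies in the closure of the set of periodic points, with no control on their type. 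Everything you establish before this point — the defining property of $\mathcal{E}_f$ applied to the iterates $f^{k_0}(U)$ plus compactness — only yields a finite union $\overline{U}\subset\bigcup_{k=0}^{M}f^{k}(U)$, and the whole content of the lemma is precisely the upgrade to a single iterate $N$ with $\overline{U}\subset f^{N}(U)$. So the central step is unsupported. The same unavailable ingredient is invoked a second time to identify $\mathcal{E}_{f^{N}}$ with $\mathcal{E}_f$; that identity is in any case better obtained from the fact that a nonconstant quasiregular self-map of $S^n$ has finite fibres, so finiteness of backward orbits under $f^N$ and under $f$ are equivalent, which needs no periodic points at all.

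Two smaller points. Shrinking $U$ is not literally without loss of generality, because the conclusion $\overline{U}\subset f^{N}(U)$ concerns the original $U$: you would need to close the loop by letting the exhausting sequence built from the small ball eventually swallow the compact set $\overline{U}$, and this works only when $\overline{U}\cap\mathcal{E}_f=\emptyset$ (for $f(z)=z^{2}$ and $U$ a punctured disc about the origin the inclusion fails for every $N$, so some such convention is implicit in the cited statement); and your exhaustion argument tacitly uses $J(f^{N})=J(f)$, which should be said. These are repairable, but the reliance on density of repelling cycles is not: any self-contained proof must reach the single-iterate nesting by other means, which is exactly the burden the citation to \cite[Proposition 3.2]{HMM} is carrying in the paper.
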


\subsection{Modulus and capacity}

See \cite{V} for more details on the notions introduced in this subsection.

Write $\chi$ and $d$ for the chordal and Euclidean distances on $S^n$
and $\R^{n}$ respectively.
The chordal distance is normalized so that $\chi(x,y)\leq 1$ for all $x,y \in S^n$ with equality if and only if $x$ and $y$ are antipodal points. 
For sets $E$ and $F$ in $S^n$, we write $\chi(E)$ for the chordal diameter of $E$ and $\chi(E,F)$ for the chordal distance between $E$ and $F$.
If $E$ consists of one point $x \in S^n$, we write $\chi(x,F)$.
Similarly, for sets $E$ and $F$ in $\R^{n}$, we write $d(E)$ for the Euclidean diameter of $E$ and $d(E,F)$ for the Euclidean distance between $E$ and $F$. Denote by $B_{d}(x,r)$ and $B_{\chi}(x,r)$ respectively the balls
centred at $x$ of Euclidean and chordal radius $r$. We also write $\mathbb{B}^n=B_d(0,1)$.
Denote by $A_{d}(x,r,s)$ the Euclidean annulus $\{y\in \R^{n} : r<d(y,x) <s\}$. The chordal annulus
$A_{\chi}(x,r,s)$ is defined analogously.

A domain $R\subset S^n$ is called a ring domain if $S^n \setminus R$ has exactly two components. If the two components are $C_{0}$ and $C_{1}$, then we write $R = R(C_{0},C_{1})$.

Given two sets $E$ and $F$, we write $\Delta (E,F ; V)$ for the family of paths with one end-point in $E$, the other end-point in $F$, and which are contained in $V$. 
When $V$ is $S^n$, we abbreviate the notation to $\Delta(E,F)$.

The $n$-modulus $M(\Gamma)$ of a path family $\Gamma$ is defined by
\[
M(\Gamma) = \inf \int _{\R^n} \rho ^{n} \: dm,
\]
where $m$ denotes $n$-dimensional Lebesgue measure, and the infimum is taken
over all non-negative Borel measurable functions $\rho$ such that
\[
\int _{\gamma} \rho \: ds \geq 1
\]
for each locally rectifiable curve $\gamma \in \Gamma$. The $n$-modulus is a conformal invariant.

The conformal modulus of a ring domain $R(C_{0},C_{1})$ is defined by
\begin{equation}
\label{moddef}
\mod R(C_{0},C_{1}) = \left ( \frac{ M(\Delta(C_{0},C_{1}))}{\omega _{n-1}} \right ) ^{1/(1-n)},
\end{equation}
where $\omega_{n-1}$ is the $(n-1)$-dimensional surface area of the unit $(n-1)$-sphere. 
If $R_{0},R_{1}$ are two ring domains with $R_{0} \subset R_{1}$, then
\begin{equation}
\label{modeq1}
\mod R_{0} \leq \mod R_{1}.
\end{equation}
The capacity of a ring domain $R(C_{0},C_{1})$ is defined to be 
\begin{equation}
\label{capdef}
\capac R = M(\Delta(C_{0},C_{1})).
\end{equation}

The Teichm\"{u}ller ring $R_{T,n}(s)$ is a special example of a ring domain. It has complementary components $[-e_{1},0]$ and $[se_{1},\infty]$, where $e_{1} = (1,0,\ldots,0)$ and $s>0$. We write $\tau_{n}(s)$ for the $n$-modulus of the family of paths connecting the complementary components of $R_{T,n}(s)$ in $S^{n}$, that is
\[
\tau_{n}(s) = \capac R_{T,n}(s).
\]
It is shown in \cite[Lemma 5.53]{V} that, for $s>0$, the function $\tau_{n}(s)$ is decreasing in $s$ and satisfies 
\begin{equation}
\label{taulim}
\tau_n(s)>0 \quad \mbox{and} \quad \lim _{s \rightarrow \infty} \tau_{n}(s) = 0.
\end{equation}

The following results relate the Teichm\"{u}ller ring capacity, the capacity of ring domains and
the distance between their complementary components.

\begin{lemma}[{\cite[Lemma 2.9]{JV}}]
\label{lemma1a}
Let $R$ be a ring domain in $S^n$ with complementary components $E$ and $F$. Then
\[
\capac R \geq 2^{1-n} \tau_{n} \left ( \frac{2 \chi(E,F)}{ \min \{ \chi (E), \chi (F) \}} \right).
\]
\end{lemma}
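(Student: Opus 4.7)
My plan is to normalize the configuration by a Möbius transformation and then apply the extremal property of the Teichmüller ring in $\R^n$.

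First, by compactness I would select $a_0, a_1 \in E$ realising $\chi(a_0, a_1) = \chi(E)$, points $b_0, b_1 \in F$ realising $\chi(b_0, b_1) = \chi(F)$, and points $p \in E$, $q \in F$ realising $\chi(p, q) = \chi(E,F)$. Without loss of generality assume $\chi(E) \le \chi(F)$, so $\min\{\chi(E),\chi(F)\} = \chi(E)$ and we abbreviate $s = 2\chi(E,F)/\chi(E)$. Since $E$ and $F$ are the two complementary components of a ring domain they are continua, so each contains a connected subset joining its two chosen points.

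Next I would apply a Möbius transformation $\phi \colon S^n \to S^n$ designed to place both $\phi(E)$ and $\phi(F)$ into a bounded Euclidean region: for instance, one can send the point antipodal to $p$ to $\infty$, so that the images lie inside a bounded ball of $\R^n$. By the conformal invariance of the $n$-modulus, $\capac R = \capac \phi(R)$. Within a bounded subset of $\R^n$ the chordal and Euclidean metrics satisfy $\tfrac{1}{2} d \le \chi \le d$, which is the source of the factor $2$ inside $\tau_n$ in the conclusion.

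Finally I would invoke the Teichmüller extremal theorem (see Väisälä \cite{V}), which asserts that a ring in $\R^n$ whose two complementary continua contain specified pairs of points has $n$-capacity bounded below by $\tau_n(\rho)$ for an appropriate ratio $\rho$ of the Euclidean distances between these points. Applied to $\phi(E)$ and $\phi(F)$ with the pairs of selected points, the chordal-to-Euclidean comparison bounds $\rho$ above by $s = 2\chi(E,F)/\chi(E)$, and monotonicity of $\tau_n$ (recorded in~(\ref{taulim})) then produces the argument in $\tau_n$ as stated. The prefactor $2^{1-n}$ emerges from rescaling the normalized picture into the standard Teichmüller ring $R_{T,n}(s)$ and the power $1-n$ that relates the $n$-modulus and the $n$-capacity via~(\ref{moddef}).

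The main obstacle will be pinning down the constants $2$ and $2^{1-n}$ exactly: the Teichmüller extremal theorem delivers estimates in Euclidean distances, while the statement of the lemma is phrased in chordal terms, so one must choose the Möbius normalization so that the chordal-to-Euclidean comparison is sharp on the relevant continua, and then carefully track how the rescaling to the standard position $R_{T,n}(s)$ affects the final capacity bound.
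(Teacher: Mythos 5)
The paper does not prove this lemma; it is quoted verbatim from J\"arvi and Vuorinen \cite[Lemma 2.9]{JV} and used as a black box, so there is no in-paper argument to compare against. Judged on its own terms, your strategy --- normalize by a chordal isometry (which preserves $\capac R$ and all the chordal quantities appearing in the statement), pass to $\R^n$, and then invoke the Euclidean Teichm\"uller estimate of Lemma~\ref{lemma2} --- is a reasonable way to try to derive the chordal statement from the Euclidean one.

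There is, however, a concrete gap precisely at the step you flag as the obstacle. The comparison $\tfrac12\,d(x,y)\le \chi(x,y)\le d(x,y)$ does not hold on an arbitrary bounded subset of $\R^n$: since
\[
\chi(x,y)=\frac{|x-y|}{\sqrt{1+|x|^2}\,\sqrt{1+|y|^2}},
\]
the lower inequality requires $(1+|x|^2)(1+|y|^2)\le 4$, that is, both points must lie in the closed \emph{unit} ball; on larger bounded regions the comparison constant degrades. Your normalization --- sending the antipode of $p$ to $\infty$ --- places $\phi(p)$ at the origin but gives no control on where the rest of $\phi(E)$ and $\phi(F)$ lands. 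Worse, if the antipode of $p$ lies in $F$ then $\phi(F)$ contains $\infty$ and is not even a continuum in $\R^n$, so Lemma~\ref{lemma2} cannot be applied directly. Consequently the needed inequality $d(\phi(E),\phi(F))/d(\phi(E))\le 2\chi(E,F)/\chi(E)$ is not established, and neither the factor $2$ inside $\tau_n$ nor the prefactor $2^{1-n}$ is obtained by the argument as written. A repair requires a sharper normalization (for example, placing extremal points symmetrically so that the relevant extremizing pairs genuinely lie in the closed unit ball) or working with a M\"obius-invariant quantity such as the absolute cross-ratio, as in Väisälä's chordal formulations around \cite[Corollary 7.37]{V}, together with a treatment of the case where one of $\phi(E),\phi(F)$ is unbounded.
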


\begin{lemma}[{\cite[Corollary 7.39]{V}}]
\label{lemma2}
Let $E$ and $F$ be disjoint continua in $\R^{n}$ which satisfy
$0<d(E) \leq d(F)$. Then
\[
M(\Delta (E,F)) \geq 2^{1-n} \tau _{n} \left ( \frac{ d(E,F) }{d(E)} \right ).
\]
\end{lemma}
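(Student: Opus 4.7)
The plan is to reduce this Euclidean estimate to the extremal property of the Teichm\"uller ring $R_{T,n}$ by combining normalizing similarities, monotonicity of the $n$-modulus under inclusion of path families, and a symmetrization step. This mirrors the spherical bound in Lemma~\ref{lemma1a} and is essentially the route taken in \cite{V}.

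First I would normalize. Since the $n$-modulus of a path family is invariant under Euclidean similarities, I may assume $d(E)=1$. Choose $x_0, x_1 \in E$ with $|x_0 - x_1| = d(E) = 1$ and $y_0 \in F$ with $|y_0 - x_0| \leq t + \varepsilon$, where $t = d(E,F)$. After a translation and rotation, arrange $x_0 = 0$, $x_1 = -e_1$, and place $y_0$ on the positive $x_1$-axis at distance close to $t$. Because $d(F) \geq d(E) = 1$, the continuum $F$ also contains some $y_1$ with $|y_1 - y_0| \geq 1$, so $F$ is not squeezed near $y_0$.

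Second, I would invoke the extremal property of the Teichm\"uller ring: among all ring domains in $S^n$ whose complementary components are continua containing the pairs $\{0, -e_1\}$ and $\{te_1, \infty\}$ respectively, the ring $R_{T,n}(t)$ realizes the minimum capacity, namely $\tau_n(t)$. The continuum $E$ meets the first condition, but $F$ is bounded rather than reaching $\infty$. The standard remedy is to symmetrize $F$ by reflection in a sphere separating $E$ from $F$; this produces an unbounded continuum $F^{*}$ satisfying the Teichm\"uller condition, and a superposition argument for $n$-modulus gives
\[
M(\Delta(E,F)) \geq 2^{1-n} M(\Delta(E,F^{*})) \geq 2^{1-n} \tau_n(t).
\]
Monotonicity of $\tau_n$, together with the inclusion of path families obtained by replacing $E$ and $F$ with smaller extremal continua, then yields the stated inequality.

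The main obstacle is executing the symmetrization rigorously and verifying that it produces precisely the factor $2^{1-n}$. This constant is characteristic of passing between bounded and unbounded configurations in dimension $n$: it appears identically in Lemma~\ref{lemma1a} and reflects how the $n$-modulus of a curve family behaves under reflection through a sphere. Once this reduction is handled, everything else follows from the monotonicity properties of $\tau_n$ recorded in~(\ref{taulim}) and the general inclusion inequality $\Gamma_1 \subset \Gamma_2 \Rightarrow M(\Gamma_1) \leq M(\Gamma_2)$ for $n$-modulus.
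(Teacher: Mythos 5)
The paper does not prove this lemma: it is simply quoted from Vuorinen's monograph as \cite[Corollary 7.39]{V}, so there is no in-paper argument to compare against. Judged on its own merits, your sketch has the right general flavour (normalize by a similarity, appeal to the extremal property of the Teichm\"uller ring, symmetrize) — and that is indeed the ballpark of Vuorinen's development — but the pivotal symmetrization step, which is supposed to carry the whole estimate, is both geometrically wrong and points the wrong way for the modulus.

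You assert that reflecting $F$ in a sphere $S$ separating $E$ from $F$ produces an \emph{unbounded} continuum $F^{*}$. Inversion in a sphere maps a bounded set to an unbounded set only when the centre of the sphere lies inside that set; if $S$ separates $E$ from $F$ then its centre is not a point of $F$, so the image of $F$ is again bounded and you still do not have a Teichm\"uller configuration. Even granting some $F^{*}$ with the desired shape, the displayed chain $M(\Delta(E,F)) \geq 2^{1-n} M(\Delta(E,F^{*}))$ cannot come from the inclusion inequality you invoke at the end: if $F \subset F^{*}$ then $\Delta(E,F) \subset \Delta(E,F^{*})$, and the modulus is monotone \emph{increasing} in the path family, so $M(\Delta(E,F)) \leq M(\Delta(E,F^{*}))$ — the opposite of what you need. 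If instead $F^{*}$ is meant to be the image of $F$ under a M\"obius map $T$, then conformal invariance forces you to transport $E$ as well, giving the identity $M(\Delta(E,F)) = M(\Delta(T(E),T(F)))$ with no factor $2^{1-n}$ at all. That factor is the quantitative cost of comparing a general pair of bounded continua to the extremal Teichm\"uller ring, and in \cite{V} it is tracked through a chain of auxiliary estimates (spherical symmetrization and the Gr\"otzsch/Teichm\"uller comparison functions) that your sketch neither reproduces nor replaces. Your closing paragraph concedes that executing the symmetrization ``is the main obstacle,'' but that obstacle \emph{is} the proof, and the proposal leaves it unresolved, with the one concrete inequality it offers going in the wrong direction.
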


The next lemma is mentioned as Remark 2.7(ii) of \cite{JV}, which refers to \cite[Corollary~7.37]{V}, but we provide a proof for the convenience of the reader.

\begin{lemma}
\label{lemma3}
A closed set $X$ is uniformly perfect if and only if the moduli of the chordal annuli separating $X$ are bounded from above.
\end{lemma}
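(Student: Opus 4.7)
The plan is to prove both implications. One direction is immediate: any chordal annulus is a ring domain, so if the moduli of all separating ring domains are bounded, then in particular the moduli of the separating chordal annuli are bounded.

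For the converse, suppose every chordal annulus separating $X$ has modulus at most some $\beta$, and let $R = R(C_0, C_1)$ be an arbitrary ring domain separating $X$; the task is to bound $\mod R$. Assume without loss of generality that $\chi(C_0) \leq \chi(C_1)$. The key geometric step is to pick $x \in C_0$ and form the concentric chordal annulus $A := A_\chi(x, r_1, r_2)$ with $r_1 = \chi(C_0)$ and $r_2 = \chi(x, C_1)$. By definition of chordal diameter and chordal distance, $C_0 \subset \overline{B_\chi(x, r_1)}$ and $C_1 \cap B_\chi(x, r_2) = \emptyset$, so $A$ is disjoint from $C_0 \cup C_1$ and therefore $A \subset R$; moreover the two complementary balls of $A$ each meet $X$, via $C_0 \cap X$ and $C_1 \cap X$, so $A$ separates $X$.

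By hypothesis $\mod A \leq \beta$, and since $\mod A_\chi(x, r_1, r_2)$ is a strictly increasing function of $r_2/r_1$ tending to infinity with this ratio, $r_2/r_1$ is bounded in terms of $\beta$. Since $r_2/r_1 \geq \chi(C_0, C_1)/\min\{\chi(C_0), \chi(C_1)\}$, Lemma~\ref{lemma1a} yields a positive lower bound on $\capac R$, which via \eqref{moddef} gives the desired upper bound on $\mod R$.

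Two edge cases need to be dispensed with. If $\chi(C_0, C_1) \leq \chi(C_0)$, then the annulus $A$ is degenerate, but in this case the Teichm\"uller ratio is already at most $2$ and Lemma~\ref{lemma1a} applies directly to bound $\mod R$. If $\chi(C_0) = 0$, i.e.\ $C_0 = \{x\} \subset X$, then chordal annuli around $x$ of arbitrarily small inner radius separate $X$ with unbounded modulus, contradicting the hypothesis; hence this case does not arise. I do not anticipate a real obstacle: the whole argument amounts to the natural geometric observation that any ring domain inscribes a concentric chordal annulus capturing the same Teichm\"uller-type ratio, together with routine bookkeeping via Lemma~\ref{lemma1a} and \eqref{moddef}.
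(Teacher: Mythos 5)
Your argument is correct and is essentially the contrapositive of the paper's proof: both inscribe the chordal annulus $A_\chi(x,\chi(C_0),\chi(x,C_1))$ inside the given separating ring and combine the explicit chordal-annulus modulus formula with Lemma~\ref{lemma1a}. One small imprecision: $\mod A_\chi(x,r_1,r_2)$ is \emph{not} a function of the ratio $r_2/r_1$ alone (it equals $\log\bigl(\tfrac{r_2}{r_1}\sqrt{\tfrac{1-r_1^2}{1-r_2^2}}\bigr)$), but since $r_1\le r_2$ forces the square-root factor to be at least $1$, the inequality $\mod A\ge\log(r_2/r_1)$ holds and gives exactly the bound $r_2/r_1\le e^\beta$ that you need, so the conclusion stands; this direction is in fact marginally cleaner than the paper's, which must additionally argue that the inner radii tend to $0$ in order to bound the square-root factor \emph{below}.
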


\begin{proof}
It is clear from the definition that if $X$ is uniformly perfect, then the implication holds. Suppose that $X$ is not uniformly perfect. Then there exist ring domains
$R_{k} = R(E_{k},F_{k})$ separating $X$ such that $\mod R_{k} \rightarrow \infty$. Without loss of generality, we may assume that $\chi (E_{k}) \leq \chi (F_{k})$ for all $k \geq 1$.
Then by (\ref{moddef}), (\ref{capdef}), (\ref{taulim}) and Lemma \ref{lemma1a},
it follows that
\begin{equation}
\label{l1aeq2}
\frac{\chi (E_{k},F_{k})}{\chi(E_{k})} \rightarrow \infty.
\end{equation}
Choose $k_{0} \in \N$ such that 
\begin{equation}
\label{l1aeq1}
10\chi (E_{k}) \leq  \chi (E_{k},F_{k})
\end{equation}
for $k \geq k_0$, and assume henceforth that $k \geq k_{0}$.
Choose $x_{k} \in E_{k}\cap X$ and let  $u_{k}= \chi (E_{k})$ and  $w_{k} = \chi (x_{k},F_{k}) \geq \chi(E_{k},F_{k})$.
Then by (\ref{l1aeq1}), the chordal annulus 
$A_{k} = A_{\chi}(x_{k},u_{k},w_{k})$ is contained in $R_{k}$ and separates $X$. 
Note that (\ref{l1aeq1}) ensures that $A_k$ is a well-defined non-empty chordal annulus.
Further, since $w_k\in [0,1]$ and $u_{k} \to 0$ by (\ref{l1aeq2}), we may assume that, for $k\ge k_0$,
\begin{equation}
\label{l1aeq3}
\sqrt \frac{ 1- u_{k}^{2}}{1-w_{k}^{2}} \geq \frac{1}{2}.
\end{equation}

Using \cite[Corollary 5.18]{V}, (\ref{moddef}) and (\ref{l1aeq3}), we have that
\[
 \mod A_{k} = \log \left ( \frac{w_{k}}{u_{k}} \sqrt{ \frac{ 1-u_{k}^{2}}{1-w_{k}^{2}}} \right )  \geq \log \left ( \frac{w_k}{2u_k} \right ),  
\]
which together with (\ref{l1aeq2}) implies that $\mod A_{k} \rightarrow \infty$.
\end{proof}

The details of the proof of Lemma \ref{lemma3} yield the following corollary.

\begin{corollary}
\label{lemma3cor}
Let $X\subset \R^{n}$ be a compact set which is not uniformly perfect. 
Then there exists a sequence of chordal annuli $B_{k} = A_{\chi}(x_{k},u_{k},v_{k})$
separating $X$ with $x_{k} \in X$ and $\mod B_{k} \to \infty$ such that each annulus $B_k$ separates $x_k$ from $\infty$ and is contained in a bounded region of $\R^{n}$.
\end{corollary}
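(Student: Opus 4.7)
The plan is to revisit the construction in the proof of Lemma~\ref{lemma3} and shrink the outer chordal radius of the annulus slightly so that $\infty$ lies strictly outside. Running the argument from that proof produces chordal annuli $A_k = A_\chi(x_k, u_k, w_k)$ with $x_k \in E_k \cap X$, $u_k = \chi(E_k) \to 0$, and $w_k = \chi(x_k, F_k)$; each $A_k$ is contained in a ring domain $R_k$ separating $X$, and $\mod A_k \to \infty$. Condition $x_k \in X$ is therefore already in hand; the task is to arrange the two extra properties without destroying the separation of $X$ or the blow-up of the modulus.

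The key new ingredient is that $X$, being compact in $\R^n$, is bounded, so there exists $\delta > 0$ with $\chi(x,\infty) \geq \delta$ for every $x \in X$. I would then set $v_k = \min\{w_k, \delta/2\}$ and $B_k = A_\chi(x_k, u_k, v_k)$. Since $u_k \to 0$, eventually $u_k < v_k$, so $B_k$ is a genuine non-empty chordal annulus contained in $A_k$. Checking the required properties is then direct: $B_k \cap X \subset A_k \cap X = \emptyset$; the inner complementary component contains $x_k \in X$; the outer complementary component meets $F_k \cap X$, because every point of $F_k$ has chordal distance at least $w_k \geq v_k$ from $x_k$; and $v_k \leq \delta/2 < \delta \leq \chi(x_k,\infty)$ places $\infty$ strictly in the outer component. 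Hence $B_k$ separates $X$, separates $x_k$ from $\infty$, and lies in the set $\{y \in S^n : \chi(y,\infty) \geq \delta/2\}$, which is a bounded subset of $\R^n$.

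To finish, I would verify that $\mod B_k \to \infty$ by splitting into the two cases in the definition of $v_k$. Whenever $v_k = w_k$ one has $B_k = A_k$ and the conclusion is just Lemma~\ref{lemma3}; when $v_k = \delta/2$, the explicit chordal-annulus modulus formula used at the end of the proof of Lemma~\ref{lemma3} produces a lower bound comparable to $\log(\delta/(2u_k))$, which tends to infinity as $u_k \to 0$. There is no serious obstacle here: the work was already done in Lemma~\ref{lemma3}, and the only new observation needed is that compactness of $X$ in $\R^n$ forces $\chi(x_k,\infty)$ to be bounded away from zero, so the outer radius of the annulus can be capped strictly below $\chi(x_k,\infty)$ without affecting the other properties.
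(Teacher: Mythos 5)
Your proof is correct and follows essentially the same route as the paper: starting from the annuli $A_\chi(x_k,u_k,w_k)$ of Lemma~\ref{lemma3}, you cap the outer radius at half the chordal distance from $X$ to $\infty$ (your $\delta/2$, the paper's $\eta/2$ with $\eta=\chi(\infty,X)$), discard the finitely many indices where the inner radius is too large, and observe that $v_k/u_k\to\infty$ still forces $\mod B_k\to\infty$. Your verification of the separation and boundedness properties is slightly more explicit than the paper's, but the underlying argument is identical.
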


\begin{proof}
Following the notation and proof of Lemma \ref{lemma3}, we find $x_{k} \in X$
and chordal annuli $A_{k} = A_{\chi}(x_{k},u_k,w_k)$ which separate $X$ and satisfy $w_k/u_k \to \infty$.

Let $\eta = \chi (\infty,X)$, which is positive since $X$ is compact.
Let $v_{k} = \min \{ \eta /2 , w_{k} \}$,
then $B_{k} = A_{\chi}(x_{k},u_{k},v_{k}) \subseteq A_{k}$. It is possible that
$u_{k} \geq v_{k}$, but only for finitely many $k \in \N$, since $u_{k} \to 0$ and $w_k>u_k$.
We remove these finitely many terms from the sequence and re-label. 

To prove that $\mod B_{k} \to \infty$, we observe that
\[
\frac{v_k}{u_k}= \min\left\{\frac{\eta}{2u_k}, \frac{w_k}{u_k}\right\} \to \infty
\]
and then follow the last part of the proof of Lemma \ref{lemma3}.
The other claims of the corollary follow immediately.
\end{proof}

\section{Proof of Theorem \ref{mainthm}}

Let $f:S^n \to S^n$ be a uniformly quasiregular mapping. If $J(f) = S^n$ then we are done, noting that this case can occur, for example with Latt\`{e}s type mappings \cite{M1}.
Otherwise, there exists $x \in F(f)$. Let $g:S^n \to S^n$ be a M\"{o}bius transformation which sends $x$ to~$\infty$. Then $\tilde{f}=g \circ f \circ g^{-1}$ is a uniformly quasiregular mapping for which $\infty \in F(\tilde{f})$. Since $F(\tilde{f})$ is open, this implies that  $J(\tilde{f})$ is contained in a bounded region of $\R^n$.
Further, $J(f) = g^{-1}( J(\tilde{f}) )$, and since $g$ is conformal, $J(f)$ is uniformly perfect if and only if $J(\tilde{f})$ is uniformly perfect.

We may therefore assume without loss of generality that $J(f)$ is a compact subset of $\R^n$.
Suppose for a contradiction that $J(f)$ is not uniformly perfect. 
Then by 
Lemma \ref{lemma3} and Corollary \ref{lemma3cor}, there exist $x_{k} \in J(f)$ and a sequence of 
chordal annuli 
\[ B_{k}=A_{\chi}(x_{k},u_{k},v_{k}) \subset \R^{n} \setminus J(f)\] 
which separate $J(f)$ 
such that $\mod B_{k} \to \infty$ and $B_{k} \subset B_{d}(0,T)$ for some $T>0$.

By \cite[p.8]{V}, the chordal annuli $B_{k}$ take the following form:
there exist points $y_{k},z_{k} \in \R^{n}$ and $s_{k},t_{k}>0$
such that 
\[
B_{k} = B_{d}(z_{k},s_{k}) \setminus \overline{B_{d}(y_{k},t_{k})}.
\]

\begin{lemma}
We have $s_{k}/t_{k} \rightarrow \infty$. 
\end{lemma}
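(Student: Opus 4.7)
The plan is to bound $\mod B_k$ from above by the modulus of a concentric Euclidean annulus with the same inner complementary component as $B_k$. Once such a bound of the form $\mod B_k \leq \log(2s_k/t_k)$ is in hand, the hypothesis $\mod B_k \to \infty$ immediately yields $s_k/t_k \to \infty$.

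First, I would observe that since $B_k$ is a non-empty ring domain whose complementary components are $\overline{B_d(y_k,t_k)}$ and $S^n \setminus B_d(z_k,s_k)$, these two sets must be disjoint in $S^n$. This forces $\overline{B_d(y_k,t_k)} \subset B_d(z_k,s_k)$, and hence
\[
d(y_k,z_k) + t_k < s_k.
\]

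Next, I would introduce the concentric Euclidean annulus
\[
A_k \,=\, B_d\bigl(y_k,\, s_k + d(y_k,z_k)\bigr) \,\setminus\, \overline{B_d(y_k,t_k)}.
\]
For $w \in B_k$, the triangle inequality gives $d(w,y_k) \leq d(w,z_k) + d(y_k,z_k) < s_k + d(y_k,z_k)$, while of course $d(w,y_k) > t_k$; hence $B_k \subset A_k$. By the monotonicity of modulus recorded in (\ref{modeq1}), it follows that $\mod B_k \leq \mod A_k$. The modulus of a concentric Euclidean annulus is known explicitly in terms of its radii, so $\mod A_k = \log\bigl((s_k + d(y_k,z_k))/t_k\bigr)$, and combining this with $d(y_k,z_k) < s_k$ gives
\[
\mod B_k < \log(2s_k/t_k).
\]
Taking $k\to\infty$ and using $\mod B_k \to \infty$ then yields $s_k/t_k \to \infty$, as required.

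I do not foresee any real obstacle. The only point requiring care is choosing the enclosing concentric annulus so that it shares the inner complementary component of $B_k$: this is precisely what makes monotonicity of modulus produce a bound involving the ratio $s_k/t_k$ rather than some less convenient quantity. Notice also that this step does not use the boundedness of the $B_k$ or the location of $x_k$; it is a purely geometric consequence of the displayed form of the ring domain.
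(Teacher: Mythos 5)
Your argument is correct and follows essentially the same route as the paper: you enclose $B_k$ in the concentric Euclidean annulus $A_d(y_k, t_k, s_k+d(y_k,z_k))$, invoke monotonicity of modulus and the explicit formula for the modulus of a concentric annulus, and bound $d(y_k,z_k)$ by $s_k$. The only difference is that you spell out why $d(y_k,z_k)<s_k$ (via the containment of complementary components), a small detail the paper leaves implicit; also note that your symbol $A_k$ clashes with the paper's later use of $A_k$ for a different annulus, so you would want to rename it (the paper calls it $\Omega_k$).
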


\begin{proof}
Let $\Omega_{k}=A_{d}(y_{k},t_{k},s_{k}+d(z_{k},y_{k}))$ so that $B_{k} \subset \Omega_{k}$.
By (\ref{modeq1}), $\mod B_{k} \leq \mod \Omega_{k}$. By (\ref{moddef}) and \cite[(5.14), p.53]{V}, 
\[
\mod \Omega _{k} = \log \left ( \frac{ s_{k} + d(z_{k},y_{k})}{t_{k}} \right).
\]
Then since $d(z_{k},y_{k}) \leq s_{k}$, it follows that $\mod \Omega _{k} \leq \log 2 s_{k}/t_{k}$.
Since $\mod B_{k} \to \infty$, the lemma follows.
\end{proof}

Since $s_{k}\leq T$, the lemma implies that $t_k\to0$.
Let $A_{k}\subset B_{k}$ be the Euclidean annulus
\[
A_{k} = A_{d}(y_{k},t_{k},r_{k}), 
\]
where $r_{k}$ is the Euclidean distance from $y_{k}$ to the outer boundary component of $B_{k}$.
Write $E_k=\overline{B_d(y_k,t_k)}$ for the common bounded component of the complements of $B_k$ and $A_k$. 
By Corollary \ref{lemma3cor}, the annulus $A_k$ separates $x_k$ from $\infty$, and hence $E_k=\overline{B_{\chi}(x_k,u_k)}$.

Now, suppose that $\delta >0$ is small. Since $d(E_k)=2t_k\to0$, we may assume without loss of generality that $d(E_k)\le\delta$ for all $k$. Note also that $\inte (E_{k})$ meets $J(f)$ because $x_{k} \in \inte (E_{k})$.
Using Lemma \ref{lemma5} applied to $\inte (E_{k})$, there exists a positive integer $j_{k}$ for which
\begin{equation}
\label{eq1}
d (f^{j_{k}-1}(E_{k})) \leq \delta, \quad \mbox{but} \quad d(f^{j_{k}}(E_{k})) > \delta.
\end{equation}

Let $P$ be the set of poles of $f$. Since $\infty \in F(f)$ and the Julia set is closed, we have that $\eta = d(J(f),P) >0$.
Now let 
\[ H = \{ y \in \R^{n} : d(y, J(f))\leq \eta / 3\}\] 
and  
\[ G = \{ y \in \R^{n} : d(y, H)< \eta / 3\},\]
so we have $J(f) \subset H \subset G$ and $\overline{G} \cap P = \emptyset$.
Since $E_{k}$ meets $J(f)$, we have that $f^{j_{k}-1}(E_{k}) \subset H$ by using (\ref{eq1}) and the fact that $\delta$ is small.
Then by the H\"{o}lder continuity of $f$, see for example \cite[Theorem III.1.11]{R}, there exists $C>0$ depending only on $f$, $n$ and $K$ such that
\begin{equation}
\label{Cdelta}
d( f^{j_{k}} (E_{k})) \leq C\delta ^{\alpha},
\end{equation}
where $\alpha = K_{I}^{1/(1-n)}$. In fact, we may take $C = \lambda_{n} d(H,\partial G)^{-\alpha} d(f(G))$, where $\lambda_{n}$ depends only on $n$. To see that $C$ is finite, observe that $\overline{G} \cap P = \emptyset$
and $d(H,\partial G) = \eta / 3 >0$, which respectively imply that $d(f(G))$ and $d(H,\partial G)^{-\alpha}$ are finite.

\begin{lemma}\label{r/t}
Each annulus $A_{k}$ is contained in the Fatou set $F(f)$. Moreover,  $r_k/t_k\to\infty$ as $k\to\infty$.
\end{lemma}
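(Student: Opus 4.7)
The plan is to treat the two assertions separately. The first, $A_k \subset F(f)$, is immediate from the definition of a separating ring domain: since $B_k$ separates $J(f)$, we have $B_k \cap J(f) = \emptyset$, so $B_k \subset F(f)$, and the inclusion $A_k \subset B_k$ built into the construction yields the claim.

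For the second assertion, observe that $A_k$ is a round Euclidean annulus, hence $\mod A_k = \log(r_k/t_k)$, and proving $r_k/t_k \to \infty$ is equivalent to proving $\mod A_k \to \infty$. The naive inclusion $A_k \subset B_k$ only gives $\mod A_k \le \mod B_k$, which is unhelpful, so the strategy is instead to bound $\capac B_k$ from below by a function of $r_k/t_k$. Then the hypothesis $\mod B_k \to \infty$, equivalent via (\ref{moddef}) and (\ref{capdef}) to $\capac B_k \to 0$, will force $r_k/t_k \to \infty$.

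To obtain such a lower bound on capacity I plan to apply Lemma~\ref{lemma2} with $E = E_k = \overline{B_d(y_k, t_k)}$ and a carefully chosen continuum $F$ lying in the other complementary component $F_k = S^n \setminus B_d(z_k, s_k)$ of $B_k$. A natural choice is a closed Euclidean line segment of length $2t_k$ lying along the ray from $z_k$ through $y_k$, starting at the point of $\partial B_d(z_k, s_k)$ that realises the distance $r_k$ from $y_k$. With this choice $d(F) = 2t_k = d(E_k)$ and $d(E_k, F) = r_k - t_k$, so from $\Delta(E_k, F) \subset \Delta(E_k, F_k)$ together with Lemma~\ref{lemma2} one obtains
\[
\capac B_k \;=\; M\bigl(\Delta(E_k, F_k)\bigr) \;\ge\; M\bigl(\Delta(E_k, F)\bigr) \;\ge\; 2^{1-n}\,\tau_n\!\left(\frac{r_k - t_k}{2t_k}\right).
\]
Since $\capac B_k \to 0$, the monotonicity and decay of $\tau_n$ recorded in (\ref{taulim}) then force $(r_k - t_k)/(2t_k) \to \infty$, and hence $r_k/t_k \to \infty$.

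The only delicate step is the selection of the auxiliary continuum $F$: it must lie in $F_k$, have diameter at least that of $E_k$ so that Lemma~\ref{lemma2} applies with the right ordering, and sit close enough to $E_k$ that $d(E_k, F)$ is precisely $r_k - t_k$. The radial segment above meets all of these requirements simultaneously, and the remaining verifications (non-degeneracy, disjointness, and the path-family inclusion) are routine.
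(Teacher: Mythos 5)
Your proof is correct and takes essentially the same route as the paper: both reduce the second claim to the decay of $\tau_n$ via Lemma~\ref{lemma2} applied to $E_k$ and a continuum lying in the unbounded complementary component, yielding $\capac B_k \ge 2^{1-n}\tau_n\bigl((r_k-t_k)/(2t_k)\bigr)$ and hence the conclusion. The only difference is the choice of auxiliary continuum---the paper uses $\partial F_k$ directly (valid since $s_k/t_k\to\infty$ ensures $d(\partial F_k)\ge d(E_k)$ eventually), while you build a small radial segment of diameter exactly $d(E_k)$---but the resulting estimate is the same.
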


\begin{proof}
We note that $A_k\subset B_k\subset F(f)$ because $B_k$ separates $J(f)$.
Denote by $F_k$ the unbounded component of the complement of $B_k$.
Since $\mod B_{k} \rightarrow \infty$, it follows from (\ref{moddef}), (\ref{taulim}) and Lemma \ref{lemma2} that
\[
\frac{d(E_{k},\partial F_{k})}{d(E_{k})} \rightarrow \infty.
\]
The lemma is now proved by observing that $d(E_{k})=2t_k$ and $d(E_{k},\partial F_{k})=r_k-t_k$.
\end{proof}

The idea now is to normalize everything so that we can consider mappings on the unit ball.
To this end, let $\psi_k(x) = r_kx+y_k$. Then $\psi_k$ is a linear map from $\mathbb{B}^n$ onto $A_{k} \cup E_{k}=B_{d}(y_{k},r_{k})$. 
Let $V_k= \psi_k^{-1}(E_k)=\overline{B_d(0,t_{k}/r_{k})}$.

Define
\[
g_{k} = f^{j_{k}} \circ \psi _{k} : \B^{n} \rightarrow S^n.
\]
Then by (\ref{eq1}) and (\ref{Cdelta}), we have that
\begin{equation}
\label{eq2}
\delta <d(g_{k}(V_{k})) \le C\delta^{\alpha}.
\end{equation}

Let $q=q(n,K)$ be Rickman's constant from Theorem \ref{rickman}. Suppose that $\delta$ is small enough that we can choose $q+1$ points $a_{1},\ldots,a_{q+1}$ 
in $J(f)$ which lie at mutual Euclidean distance greater than $C\delta^{\alpha}$
from each other. 
Then, using (\ref{eq2}) and the fact that  $g_{k}(\B^{n} \setminus V_{k}) \subset F(f)$ from Lemma \ref{r/t}, it follows that $g_{k}(\mathbb{B}^n)$ contains at most one of the points $a_{j}$. By passing to a subsequence and re-labelling, we may assume that
\begin{equation}
\label{g_k(B^n)}
g_k(\mathbb{B}^n)\cap\{a_1,\ldots,a_{q}\}=\emptyset.
\end{equation}

Since each $g_{k}$ is the composition of an iterate of a uniformly $K$-quasiregular mapping and a conformal mapping, it follows that $g_{k}$ is $K$-quasiregular. Theorem~\ref{montel} and (\ref{g_k(B^n)}) then imply that the $g_{k}$ form a normal family on $\B^{n}$. Therefore, since $d(V_{k}) =2t_k/r_k\rightarrow 0$ by Lemma~\ref{r/t}, and because the images $g_{k}(V_{k})$ lie in a uniformly bounded region of $\R^{n}$, we have that
$d (g_{k}(V_{k})) \rightarrow 0$. This contradicts (\ref{eq2}) and proves the theorem.

\section{Properties of the Julia set}

\subsection{Hausdorff dimension}

We briefly overview the definition of Hausdorff dimension.
For $\beta >0$, let $\Lambda ^{\beta}$ denote the \mbox{$\beta$-dimensional} Hausdorff
content; that is, 
$\Lambda^{\beta} (E) = \inf \left \{ \sum_{i=1}^{\infty} r_{i}^{\beta} \right \}$,
where the infimum is taken over all coverings of $E \subset \R^{n}$ with countably many Euclidean balls of radius $r_{i}$. Set
\[
H_{\delta}^{\beta}(E) = \inf  \left \{ \sum_{i=1}^{\infty} d(U_{i})^{\beta} \right \},
\]
where the infimum is taken over all countable coverings of $E$ by sets $U_{i}$
with $d(U_{i})< \delta$. Then the $\beta$-dimensional Hausdorff measure is
$H^{\beta}(E) = \lim_{\delta \rightarrow 0} H_{\delta}^{\beta}(E)$. The Hausdorff
dimension is defined to be
\[
\dim _{H}(E) = \inf \{\beta :  H^{\beta}(E)< \infty \}.
\]

\begin{proof}[Proof of Theorem \ref{thm2}.]
By \cite[Theorem 4.1]{JV} and Theorem \ref{mainthm}, there exist positive constants $\beta$ and $C_{1}$ depending on $f$ such that the $\beta$-dimensional Hausdorff content of $J(f)$ satisfies
\[
\Lambda^{\beta} ( \overline{B_{d}(x,r)}\cap J(f)) \geq C_{1} r^{\beta}
\]
for all $x \in J(f) \cap \R^{n}$ and $r \in (0,d(J(f)))$.
As an immediate corollary to this, we have that
the Hausdorff dimension of $J(f)$ is positive.
\end{proof}

\subsection{Further metric properties}

We introduce some further notation, following the presentation of \cite{JV}, to be able to state the theorem of this section.

Let $G$ be a domain in $\R^{n}$ with non-empty complement. Then the quasihyperbolic metric $k_{G}$ on $G$ is the metric with density
\[
w(x) = \frac{1}{d(x,\partial G)}.
\]

Let $E$ be a compact subset of a domain $G$ in $\R^{n}$. The pair $(G,E)$ is called a condenser and its capacity
is defined to be
\begin{equation}
\label{capdef2}
\capac (G,E) = M( \Delta ( E, \partial G ; G)).
\end{equation}
Given a closed set $E \subset \R^{n}$, a point $x \in \R^{n}$ and $r>0$, we set
\[
\capac (x,E,r) = \capac (B_d(x,2r),\overline{B_d(x,r)}\cap E).
\]

Now let $E \subset S^n, x \in S^n$ and $0<r<t<1$
and set
\[
m_{t}(E,r,x) = M(\Delta ( \partial B_{\chi}(x,t), E \cap \overline{ B_{\chi}(x,r)})).
\]
We say that $E$ satisfies a metric thickness condition if there exist $\delta >0$ and $r_{0} \in (0,1/4)$ such that
\[
m_{2r}(A_{x}(E),r,0) \geq \delta
\]
for all $x \in E$ and $r \in (0,r_{0})$, where $A_{x}$ is a chordal isometry
such that $A_{x}(x)=0$.

\begin{theorem}
Let $f$ satisfy the hypotheses of Theorem \ref{mainthm}, and suppose that $J(f)$ is $\alpha$-uniformly perfect. Then we have the following results.
\begin{enumerate}
\item Let $U$ be a completely invariant component of the Fatou set $F(f)$.
Then $f:(U,k_{U}) \rightarrow (U,k_{U})$ is uniformly continuous.
\item There exists a constant $C_{2}>0$ depending on $f$ such that
\[
\capac (x,J(f),r) \geq C_{2}
\]
for all $x \in J(f)$ and $r \in (0,d(J(f)))$.
\item $J(f)$ satisfies a metric thickness condition with $r_{0} = \chi(J(f))/4$
and $\delta = \delta(\alpha,n)>0$.
\end{enumerate}
\end{theorem}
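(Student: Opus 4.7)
My plan is to obtain each of the three conclusions as a direct consequence of a characterisation of uniform perfectness established by J\"{a}rvi and Vuorinen \cite{JV}, applied to the set $J(f)$, which is $\alpha$-uniformly perfect by Theorem \ref{mainthm}. The main obstacle is a small but genuine preliminary issue in part (i): one must first identify $\partial U$ with $J(f)$ in order for the relevant result of \cite{JV} to apply. The remaining two parts are essentially direct quotations from \cite{JV}, so the bulk of the work amounts to citing the right statement.

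For part (i), I would first establish that $\partial U = J(f)$. The inclusion $\partial U \subset J(f)$ is immediate since $U$ is open in $F(f)$. For the reverse inclusion, let $z \in J(f)$ and let $V$ be any neighbourhood of $z$; Lemma \ref{lemma5} yields an iterate $f^{kN}$ with $f^{kN}(V) \supset S^{n} \setminus \mathcal{E}_{f}$. Since $U$ is nonempty, completely invariant, and not contained in the finite set $\mathcal{E}_{f}$, it follows that $V \cap U \neq \emptyset$, so $z \in \overline{U} \setminus U = \partial U$. Hence $\partial U$ is $\alpha$-uniformly perfect. Uniform continuity of $f:(U,k_{U}) \to (U,k_{U})$ then follows from the theorem of \cite{JV} that every $K$-quasiregular self-map of a domain whose boundary is uniformly perfect is uniformly continuous with respect to the quasihyperbolic metric.

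For parts (ii) and (iii), the plan is simply to invoke the relevant equivalence theorems from \cite{JV}. Part (ii) follows from the capacity characterisation of uniform perfectness, which asserts that any $\alpha$-uniformly perfect compact set $E \subset \R^{n}$ satisfies $\capac(x,E,r) \geq C$ for all $x \in E$ and $r \in (0,d(E))$, with $C = C(\alpha,n)>0$; specialising to $E = J(f)$ produces the required constant $C_{2}$. Part (iii) is the corresponding metric thickness characterisation in \cite{JV}, which says directly that an $\alpha$-uniformly perfect compact subset of $S^{n}$ satisfies the metric thickness condition with $r_{0} = \chi(E)/4$ and $\delta = \delta(\alpha,n) > 0$, so the conclusion is immediate on taking $E = J(f)$. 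No further quasiregular input beyond Theorem \ref{mainthm} and the identification $\partial U = J(f)$ is required.
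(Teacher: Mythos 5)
Your proposal is correct and takes essentially the same route as the paper: all three parts are deduced from J\"{a}rvi--Vuorinen's characterisations of uniform perfectness (their Theorems 3.5, 4.1 and 4.4) applied to $J(f)$, which is uniformly perfect by Theorem \ref{mainthm}. The only difference is that you spell out the standard identification $\partial U = J(f)$ for a completely invariant Fatou component, which the paper states without proof.
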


\begin{proof}
If $U$ is a completely invariant component of $F(f)$, then $J(f) = \partial U(f)$. By Theorem \ref{mainthm} and \cite[Theorem 3.5]{JV}, we have (i).
Part (ii) follows from \cite[Theorem 4.1]{JV} and part (iii) from \cite[Theorem 4.4]{JV}.
\end{proof}

{\it Acknowledgements:}
The authors would like to thank Professor Jim Langley for initially suggesting investigating whether
the Julia sets of quasiregular mappings might be uniformly perfect, and also for his helpful comments.


\end{document}